\newtheorem{thm}{Theorem}[section]
\newtheorem{cor}[thm]{Corollary}
\newtheorem{lem}[thm]{Lemma}
\theoremstyle{remark}
\numberwithin{equation}{section}
\def \b{{\bf b}}
\def\vz{\varepsilon}
\def\oz{\omega}
\def\lz{\lambda}
\def\dz{\delta}
\def\az{\alpha}
\def\({\Bigl(}
\def \){ \Bigr)}
 \def\a{{\bf a}}
 \def\h{{\bf h}}
\def\x{{\bf x}}
\def\h{{\bf h}}
\def\y{{\bf y}}
\def\va{\varepsilon}
\def\y{{\bf y}}
\begin{document}

\title[] {A note about EC-$(s,t)$-weak tractability  of  multivariate  approximation with analytic Korobov  kernels}

\author[]{Heping Wang} \address{School of Mathematical Sciences, Capital Normal
University, Beijing 100048,
 China.}
\email{wanghp@cnu.edu.cn.}

\keywords{Exponential convergence tractability; Analytic Korobov
 kernels}

\subjclass[2010]{41A25, 41A63, 65D15, 65Y20}

\thanks{
 Supported by the
National Natural Science Foundation of China (Project no.
11671271) and
 the  Beijing Natural Science Foundation (1172004)
 }

\begin{abstract} This note is devoted to discussing multivariate approximation of
continuous functions on $[0,1]^d$ with analytic Korobov  kernels
in the worst and average case settings.  We only consider
algorithms that use finitely many evaluations of arbitrary
continuous linear functionals.  We study EC-$(s, t)$-weak
tractability   under the absolute   or normalized error criterion,
and obtain  necessary and sufficient conditions for
$0<\min(s,t)<1$ and $\max(s,t)\le 1$ in the worst case setting and
for $s,t>0$ in the average case setting.

\end{abstract}

\maketitle
\input amssym.def

\section{Introduction and main results}

We approximate multivariate problems $S=\{S_d\}_{d\in\Bbb N}$ by
algorithms that use finitely many linear functionals. The
information complexity $n(\vz, S_d)$ is defined as the minimal
number of linear functionals which are needed to find an
approximation to within an error threshold $\vz$.

 We consider exponentially-convergent tractability (EC-tractability) of the multivariate problems $S=\{S_d\}$. There are two kinds of tractability based
on polynomial-convergence and exponential-convergence. The
classical tractability describes how the information complexity
 behaves as a function of $d$ and $\vz^{-1}$, while the
exponentially-convergent tractability (EC-tractability) does as
one of $d$ and $(1+\ln\vz^{-1})$. Nowadays study of tractability
and EC-tractability has become one of the busiest areas of
research in information-based complexity (see \cite{NW1, NW2, NW3,
DLPW, IKPW, PP, X2} and the references therein).

 We briefly recall the basic
EC-tractability notions. Let $S= \{S_d\}_{d\in\Bbb N}$. We say $S$
is

$\bullet$   {\it Exponential convergent and strong polynomial
tractable  (EC-SPT)}   iff there exist non-negative numbers $C$
and $p$ such that for all $d\in \Bbb N,\ \va \in (0,1)$,
\begin{equation*}
n(\va ,S_d)\leq C(1+\ln \va ^{-1})^p;
\end{equation*}

 $\bullet$  {\it Exponential convergent and  polynomial
tractable  (EC-PT)} iff there exist non-negative numbers $C, p$
and $q$ such that for all $d\in \Bbb N, \ \va \in(0,1)$,
\begin{equation*}
n(\va ,S_d)\leq Cd^q(1+\ln \va ^{-1})^p;
\end{equation*}

$\bullet$   {\it Exponential convergent and quasi-polynomial
tractable (EC-QPT)} iff there exist two constants $C,t>0$ such
that for all $d\in \Bbb N, \ \va \in(0,1)$,
\begin{equation*}
n(\va ,S_d)\leq C\exp\{t[1+\ln(1+\ln \va ^{-1})](1+\ln d)\};
\end{equation*}

$\bullet$ {\it Exponential convergent and uniformly weakly
tractable (EC-UWT)} iff for all $\az, \beta>0$,
\begin{equation*}
\lim_{\varepsilon ^{-1}+d\rightarrow \infty }\frac{\ln n(\va
,S_d)}{(\ln \va ^{-1})^{\az }+d^{\beta }}=0;
\end{equation*}

$\bullet$  {\it Exponential convergent and weakly tractable
(EC-WT)} iff
\begin{equation*}
\lim_{\va ^{-1}+d\rightarrow \infty }\frac{\ln n(\va ,S_d)}{\ln\va
^{-1}+d}=0;
\end{equation*}

$\bullet$ {\it Exponential convergent and $(s,t)$-weakly tractable
(EC-$(s,t)$-WT)} for positive $s$ and $t$ iff
\begin{equation*}
\lim_{\varepsilon ^{-1}+d\rightarrow \infty }\frac{\ln n(\va
,S_d)}{(\ln \va ^{-1})^{s }+d^{t }}=0.
\end{equation*}

Clearly, EC-$(1,1)$-WT is the same as EC-WT, and for $0<s_1<s, \
0<t_1<t$,  EC-$(s_1,t_1)$-WT $\Longrightarrow$ EC-$(s,t)$-WT. We
also have

\vskip 2mm
\begin{center}EC-SPT $\Longrightarrow$ EC-PT  $\Longrightarrow$  EC-QPT
$\Longrightarrow$ EC-UWT  $\Longrightarrow$     EC-WT.\end{center}

\vskip 2mm

 In the definitions of EC-SPT, EC-PT, EC-QPT, EC-UWT, EC-WT, and
EC-$(s,t)$-WT, if we replace $(1+\ln {\vz}^{-1})$ by $\vz^{-1}$,
we get the definitions of \emph{strong polynomial tractability
(SPT)}, \emph{polynomial tractability (PT)},
\emph{quasi-polynomial tractability (QPT)}, \emph{uniform weak
tractability
 (UWT)}, \emph{weak tractability
 (WT)}, and \emph{$(s,t)$-weak tractability
 ($(s,t)$-WT)}, respectively.

 \vskip 2mm

This note is devoted to discussing EC-$(s,t)$-WT of  multivariate
approximation with analytic Korobov kernels in the worst and
average case settings.

Let ${\bf a}=\big\{a_k\big\}_{k\ge1}$  be a non-decreasing
sequence of positive  numbers, and let ${\bf
b}=\big\{b_k\big\}_{k\ge1}$ be a sequence of positive numbers
having a positive infimum $b_*$ so that
\begin{equation}\label{1.1}0<a_1\le a_2 \le \dots\le a_k\le \dots ,\ \ \ {\rm and}\  \ \ b_*:=\inf_{k\ge 1}
b_k>0.\end{equation}Assume that the analytic Korobov kernel
$K_{d,\a,\b}$ is of product form,
\begin{equation}\label{1.1-1}K_{d,\a,\b}(\x,\y)=\prod_{k=1}^d K _{1,a_k,b_k}(x_k,y_k),\quad \x,\,\y\in
[0,1]^d,\end{equation} where $ K _{1,a,b}$ are univariate analytic
Korobov kernels,
$$K_{1,a,b}(x,y)=\sum_{\h\in\Bbb Z}\oz^{a|h|^b}\exp(2\pi i h(x-y)), \
\ x, y\in[0,1].$$Here $\oz\in(0,1)$ is a fixed positive number,
$i=\sqrt{-1}$,  $a,\, b>0$. Hence, we have
\begin{equation}\label{1.2}K_{d,\a,\b}(\x, \y)=\sum_{\h\in\Bbb Z^d}\oz_\h\exp(2\pi i\h\cdot(\x-\y)),\ \ \x, \y\in[0,1]^d,\end{equation}
where
\begin{equation}\label{1.3}\oz_\h=\oz^{\sum_{k=1}^da_k|h_k|^{b_k}},\end{equation} for
fixed $\oz\in(0,1)$ and all $ \h=(h_1,h_2,\dots,h_d)\in\Bbb Z^d$,
and
$$\x\cdot \y=\sum_{k=1}^dx_ky_k,\ \ \ \x=(x_1,x_2,\cdots,x_d),\
\y=(y_1,y_2,\cdots,y_d)\in \Bbb R^d
$$ denotes the usual Euclidean inner product.

First we consider  the worst case setting. Denote by
$H(K_{d,\a,\b})$ the analytic Korobov space which is a reproducing
kernel Hilbert space with the reproducing kernel $K_{d,\a,\b}$
given by \eqref{1.2}. Such  space $H(K_{d,\a,\b})$ has been widely
used in  tractability study (see \cite{DKPW, DLPW, IKPW, KPW0,
KPW, LX1}).

We consider multivariate approximation problem ${\rm APP}=\{{\rm
APP}_d\}_{d\in \Bbb N}$ which is defined via the embedding
operator
\begin{equation}\label{1.3-1} {{\rm APP}}_d: H(K_{d,\a,\b})\to L_{2}([0,1]^d)\ \ {\rm with}\ \  {\rm APP}_d\,
f=f.\end{equation} We approximate ${\rm APP}_d$ by algorithms
that use only finitely many continuous linear functionals on
$H(K_{d,\a,\b})$. A function $f\in H(K_{d,\a,\b})$ is approximated
by an algorithm
\begin{equation}\label{1.4}A_{n,d}f=\phi
_{n,d}(L_1(f),L_2(f),\dots,L_n(f)),\end{equation} where
$L_1,L_2,\dots,L_n$ are continuous linear functionals on
$H(K_{d,\a,\b})$,
 and $\phi _{n,d}:\;\Bbb R^n\to
L_2([0,1]^d)$ is an arbitrary measurable mapping. The worst case
error of approximation by an algorithm $A_{n,d}$ of the form
\eqref{1.4}  is defined as
 $$e^{\rm wor}(A_{n,d})=\sup_{\|f\|_{H(K_{d,\a,\b})}\le1}
\|{\rm APP}_d\,f-A_{n,d}f\|_{L_{2}([0,1]^d)}.$$ The $n$th minimal
worst case error, for $n\ge 1$, is defined by
$$e^{\rm wor}(n,d)=\inf_{A_{n,d}}e^{\rm wor}(A_{n,d}),$$
where the infimum is taken over all algorithms of the form
\eqref{1.4}.

For $n=0$, we use $A_{0,d}=0$. We remark that  the so-called
initial error $e^{\rm wor}(0,d )$, defined by
$$e^{\rm wor}(0,d )=\sup_{\|f\|_{H(K_{d,\a,\b})}\le1}\|{\rm APP}_d\,
f\|_{L_{2}([0,1]^d)},$$ is equal to $1$. In other words, the
normalized error criterion and the absolute error criterion
coincide.

The information complexity $n(\vz, d)$ is defined by
\begin{equation*}
n(\vz ,d)=\min\{n:\,e^{\rm wor}(n,d)\leq \vz \}.
\end{equation*}

The classical tractability of the multivariate problem APP has
been investigated  and solved completely in \cite{KPW, LX1, IKPW}.
For the EC-tractability of  APP,   the sufficient and necessary
conditions for  EC-SPT, EC-PT, EC-QPT, EC-UWT, EC-WT, and
EC-$(s,t)$-WT with $\max(s,t)>1$
   were given in
  \cite{IKPW}. See the following  EC-tractability  results  of  APP:

  \vskip 2mm

 $\bullet$  EC-SPT holds iff EC-PT holds iff
$$\sum_{k=1}^\infty b_k^{-1}<\infty  \qquad {\rm and}\qquad\underset{k\to\infty}{\underline{\lim}}\frac{\ln a_k}{
k}>0.$$

\vskip 1mm

 $\bullet$  EC-QTP holds iff $$\sup_{d\in \Bbb N}\frac{\sum_{k=1}^d b_k^{-1}}{1+\ln d}<\infty\qquad {\rm and}\qquad
 \underset{k\to\infty}{\underline{\lim}}\frac{(1+\ln k)\ln a_k}{
k}>0.$$

\vskip 1mm

 $\bullet$  EC-UWT holds iff $$\lim_{k\to\infty}\frac{\ln a_k}{\ln k}=\infty.$$

\vskip 1mm

$\bullet$ EC-$(s,t)$-WT with $\max(s,t)>1$ always holds.

\vskip 3mm

 $\bullet$ EC-WT  holds iff WT holds iff  $$\lim\limits_{k\to \infty}a_k=\infty.$$

\vskip 1mm

However, the authors did not find out the
 conditions on EC-$(s,t)$-WT with
$\max(s,t)\le 1$ and $\min(s,t)<1$ in \cite{IKPW}. In this note,
we fill the gap and obtain the sufficient and necessary conditions
for EC-$(s,t)$-WT with $\max(s,t)\le 1$ and $\min(s,t)<1$. We use
estimates of  entropy numbers and technique  in \cite{SW} to
obtain the sufficient conditions for EC-$(s,t)$-WT. Such method is
first used in \cite{KMU}.

\begin{thm}\label{thm1} Consider the   approximation problem
APP  in the worst case setting with the sequences $\a$ and $\b$
 satisfying \eqref{1.1}. Then
\vskip 1mm

 (i) EC-$(1,t)$-WT with $t<1$ holds iff
\begin{equation}\label{1.7}\lim\limits_{j\to\infty}\frac{\ln j}{ a_j}=0.\end{equation}

\vskip 1mm

(ii)  EC-$(s,t)$-WT with $s<1$ and $t\leq 1$  holds iff
\begin{equation}\label{1.8}\lim\limits_{j\to\infty}\frac{j^{(1-s)/s}}{ a_j}=0.\end{equation}

\end{thm}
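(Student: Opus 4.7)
The plan is to first convert EC-$(s,t)$-WT into a lattice-point counting statement and then prove the two implications by separate constructions. Observing that $K_{d,\a,\b}$ is diagonal in the Fourier basis $\{e^{2\pi i\h\cdot\x}\}_{\h\in\Bbb Z^d}$, the operator $\mathrm{APP}_d$ has squared singular values $\{\oz_\h\}_{\h\in\Bbb Z^d}$, so $e^{\mathrm{wor}}(n,d)$ equals the $(n+1)$st largest of $\{\sqrt{\oz_\h}\}$ and
\begin{equation*}
n(\vz,d)=\Bigl|\Bigl\{\h\in\Bbb Z^d:\sum_{k=1}^d a_k|h_k|^{b_k}<\gamma\Bigr\}\Bigr|,\qquad \gamma:=\frac{2\ln\vz^{-1}}{\ln\oz^{-1}}.
\end{equation*}
Both parts of the theorem thus amount to $\ln n(\vz,d)=o(\gamma^s+d^t)$ as $\vz^{-1}+d\to\infty$.

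For necessity in (ii), if \eqref{1.8} fails there exist $\vz_0>0$ and $j_n\uparrow\infty$ with $a_{j_n}\le j_n^{(1-s)/s}/\vz_0$. Taking $d_n=j_n$ and $\gamma_n=j_na_{j_n}+1\le j_n^{1/s}/\vz_0+1$, every $\h\in\{-1,0,1\}^{j_n}$ satisfies the defining inequality, so $\ln n(\vz_n,d_n)\ge j_n\ln 3$, while $\gamma_n^s+d_n^t\le Cj_n$ using $t\le 1$, and the ratio is bounded below. For necessity in (i) the $3^d$ lower bound is too weak: $(\ln j_n)/a_{j_n}\ge\vz_0$ only forces $a_{j_n}\lesssim\ln j_n$, so $\gamma_n\sim j_n\ln j_n$ would overwhelm $j_n^t$. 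Instead I use $n(\vz,d)\ge\binom{d}{r}2^r$ whenever every $r$-subset of $[d]$ has weight sum below $\gamma$; with $r_n=\lfloor j_n^t/a_{j_n}\rfloor$, $d_n=j_n$ and $\gamma_n\sim r_na_{j_n}\sim j_n^t$, one obtains $\ln n(\vz_n,d_n)\ge r_n\ln(j_n/r_n)\gtrsim(1-t)\vz_0\,j_n^t$, which is of the same order as $\gamma_n+d_n^t\sim j_n^t$.

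For sufficiency I would apply the Chebyshev-type bound: for every $\tau>0$,
\begin{equation*}
n(\vz,d)\le\vz^{-2\tau}\prod_{k=1}^d S_k(\tau),\qquad S_k(\tau):=\sum_{h\in\Bbb Z}\oz^{\tau a_k|h|^{b_k}},
\end{equation*}
so that $\ln n(\vz,d)\le 2\tau\ln\vz^{-1}+\sum_k\ln S_k(\tau)$, with $\ln S_k(\tau)\le\ln 3+(1/b_*)\ln(1/(\tau a_k))$ when $\tau a_k\le 1$ and $\ln S_k(\tau)\le C\oz^{c\tau a_k}$ when $\tau a_k\ge 1$. One then optimizes $\tau$ in terms of $(\vz,d)$; the growth hypothesis \eqref{1.8} (resp.\ \eqref{1.7}) controls $|\{k:a_k<1/\tau\}|$ via $a_k/k^{(1-s)/s}\to\infty$ (resp.\ $a_k/\ln k\to\infty$), and combined with the entropy-number refinement from \cite{SW,KMU} this should yield $\ln n(\vz,d)=o(\gamma^s+d^t)$. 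The principal obstacle lies in the regime where $d$ and $J(\gamma):=|\{k:a_k<\gamma\}|$ are comparable: the crude bound $\ln n\lesssim\min(d,J(\gamma))\ln\gamma$ can be as large as $\gamma^{s/(1-s)}$, which exceeds $\gamma^s$ for $s\in(0,1)$, so one must extract the cancellation between $\ln\gamma$ and the average of $\ln a_k$ over $k\le J(\gamma)$, leveraging the growth hypothesis in a sharper way than the one-sided estimate above.
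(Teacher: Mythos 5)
Your reduction to the lattice-point count and both necessity arguments are sound: for (ii) the choice $\gamma_n=j_na_{j_n}+1$ with the $3^{j_n}$ lower bound works, and for (i) your bound $n(\vz,d)\ge\binom{d}{r}2^r$ with $r\approx d^t/a_d$ is in substance the same computation the paper performs (it takes $m\approx d^t/a_d$ and uses $\binom dm\ge(d/m)^m$). The genuine gap is the entire sufficiency direction, and you have in effect named it yourself without closing it. The single-parameter Chebyshev bound $n(\vz,d)\le\vz^{-2\tau}\prod_kS_k(\tau)$ cannot yield $\ln n(\vz,d)=o((\ln\vz^{-1})^s+d^t)$ for $s<1$: the term $2\tau\ln\vz^{-1}$ is linear in $\ln\vz^{-1}$, so $\tau$ must be taken of order $(\ln\vz^{-1})^{s-1}$, and then every coordinate with $a_k\le 1/\tau$ contributes about $\ln(1/(\tau a_k))$ to $\sum_k\ln S_k(\tau)$; under \eqref{1.8} there are roughly $\tau^{-s/(1-s)}$ such coordinates, and summing the logarithmic losses produces exactly the spurious factor you point to. ``Optimizing $\tau$'' does not remove it, and your final sentence --- that one must extract the cancellation between $\ln\gamma$ and the averaged $\ln a_k$ in a sharper way --- is a statement of the missing step, not a proof of it.

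For the record, the paper's device is different and is what you would need to supply. It never introduces $\tau$; it estimates the count directly. For (ii) it partitions the coordinates into dyadic blocks $\{2^l+1,\dots,2^{l+1}\}$, uses monotonicity of $\a$ to replace $a_k$ by $a_{2^l}$ on each block, and applies the entropy-number estimate \eqref{3.0} blockwise with $m_{l,\vz}=\frac{\ln\vz^{-2}}{2^{l(1-s)/s}h(2^l)\ln\oz^{-1}}$, where $h$ is a monotone, doubling-regular minorant of $\hat h(k)=a_kk^{-(1-s)/s}$ (this regularization is itself a necessary technical step, since \eqref{1.8} only gives $\hat h\to\infty$ without any monotonicity). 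The ratios $d_{l,\vz}=m_{l,\vz}/2^l$ decrease geometrically in $l$, so the blockwise contributions are summable and concentrate at the critical scale $l_0$ where $d_{l,\vz}$ crosses $1$, giving a total of order $2^{l_0}\lesssim(\ln\vz^{-1})^s/(h(2^{l_0}))^s=o((\ln\vz^{-1})^s)$. For (i) a simpler two-piece split at $i\approx(\max(d^t,\ln\vz^{-1}))^{1-\dz}$ suffices, with \eqref{1.7} controlling $\ln d/a_i$ on the tail. Without some version of this blockwise cancellation argument, the sufficiency halves of both (i) and (ii) remain unproved in your proposal.
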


Next we discuss the average case setting. Consider the
approximation problem $I=\{I_d\}_{d\in\Bbb N}$,
\begin{equation}\label{1.9} I_d\,\,: \,\,C([0,1]^d)\to L_2([0,1]^d)\quad
\text{with} \quad I_df=f.\end{equation}

The space $C([0,1]^d)$ of continuous real functions is equipped
with a zero-mean Gaussian measure $\mu_d$ whose covariance kernel
is given by  the analytic Korobov kernel $K_{d,\a,\b}$. We
approximate $I_d \, f$ by algorithms $A_{n,d}f$ of the form
\eqref{1.4} that use $n$ continuous linear functionals on
$C([0,1]^d)$.
  The average case error
for $A_{n,d}$ is defined by
\begin{equation*}
e^{\rm avg}(A_{n,d})\;=\;\Big[ \int _{C([0,1]^d)}\big \|
I_d\,f-A_{n,d}f \big \|_{L_2([0,1]^d)}^{2}\mu _d(df) \Big
]^{\frac{1}{2}}.
\end{equation*}

The $n$th minimal average case error, for $n\ge 1$, is defined  by
\begin{equation*}
e^{\rm avg}(n,d)=\inf_{A_{n,d}}e(A_{n,d} ),
\end{equation*}
where the infimum is taken over all algorithms of the form
\eqref{1.4}.

For $n=0$, we use $A_{0,d}=0$. We obtain   the so-called initial
error $$e^{\rm avg}(0,d)=e^{\rm avg}(A_{0,d}) .$$

 The information
complexity for $I_d$ can be studied using either the absolute
error criterion (ABS), or the normalized error criterion (NOR).
Then we define the information complexity $n^{{\rm avg},X}(\va
,d)$ for $X\in \{ {\rm ABS,\, NOR}\}$ as
\begin{equation*}
n^{{\rm avg}, X}(\va ,d)=\min\{n:\,e^{\rm avg}(n,d)\leq \va
CRI_d\},
\end{equation*}where
\begin{equation*}
CRI_d=\left\{\begin{matrix}
 & 1, \; \ \ \quad\qquad\text{ for X=ABS,} \\
 &e^{\rm avg}(0,d),\quad \text{ for X=NOR.}
\end{matrix}\right.
\end{equation*}

The classical tractability of the multivariate problem $I=\{I_d\}$
has been investigated   in \cite{LX2, LX3, CWZ}. For the
EC-tractability of  $I$,   the sufficient and necessary conditions
for  EC-SPT, EC-PT, EC-UWT, EC-WT under ABS or NOR
   were given in
  \cite{LX2}, see the following  EC-tractability  results  of  $I$:

  \vskip 2mm

 $\bullet$ For ABS or NOR,  EC-SPT holds iff EC-PT holds iff
$$\sum_{k=1}^\infty b_k^{-1}<\infty  \qquad {\rm and}\qquad\underset{k\to\infty}{\underline{\lim}}\frac{\ln a_k}{
k}>0.$$

\vskip 1mm

 $\bullet$ For ABS or NOR,   EC-UWT holds iff $$\lim_{k\to\infty}\frac{\ln a_k}{\ln k}=\infty.$$

\vskip 1mm

 $\bullet$  For ABS or NOR,  EC-WT  holds  iff  $$\lim\limits_{k\to \infty}a_k=\infty.$$

\vskip 1mm

 In this note,
we  obtain the sufficient and necessary conditions for
EC-$(s,t)$-WT. We use the connection about EC-tractability in the
worst and average case settings (see \cite{X2, LXD}). Such
connection was used to study the  EC-tractability of multivariate
approximation with Gaussian kernel in the average case setting
(see \cite{CW}). Specially, according to \cite[Theorems 3.2 and
4.2]{X2} and \cite[Theorem 3.2]{LXD}, we have the same results in
the worst and average case settings  concerning EC-WT, EC-UWT, and
EC-$(s,t)$-WT for $0<s\le 1$ and $t>0$ under ABS.

\begin{thm}\label{thm2} Consider the above  approximation problem
$I=\{I_d\}$  with the sequences $\a$ and $\b$
 satisfying \eqref{1.1}. Then

\vskip 2mm

(i)  for ABS or NOR, if $s>0$ and $t>1$ then EC-$(s,t)$-WT always
holds;

\vskip 2mm

(ii)  for ABS or NOR, EC-$(s,1)$-WT with $s\ge 1$ holds iff EC-WT
holds iff $$\lim\limits_{j\to \infty} a_j=\infty;$$

(iii)  for ABS, EC-$(1,t)$-WT with $t<1$ holds iff
\begin{equation}\label{1.10}\lim\limits_{j\to\infty}\frac{\ln j}{ a_j}=0;\end{equation}

(iv)  for ABS or NOR, EC-$(s,t)$-WT with $s<1$ and $t\leq 1$
holds iff
\begin{equation}\label{1.11}\lim\limits_{j\to\infty}\frac{j^{(1-s)/s}}{a_j}=0;\end{equation}

 (v)  for ABS or NOR, EC-$(s,t)$-WT with $s>1$ and $t< 1$ holds iff
\begin{equation}\label{1.12}\lim_{j\to
\infty}j^{1-t}a_j\,\oz^{a_j}=0.\end{equation}

\end{thm}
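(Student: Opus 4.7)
My plan combines three ingredients: the spectral description of the average-case minimal errors, the worst-case/average-case equivalence under ABS from \cite[Theorems 3.2, 4.2]{X2} and \cite[Theorem 3.2]{LXD}, and direct lattice-point estimates. Because $\mu_d$ has tensor-product covariance $K_{d,\a,\b}$, the eigenvalues of the associated integral operator on $L_2([0,1]^d)$ are $\{\lambda_h=\omega^{\sum_{k=1}^d a_k|h_k|^{b_k}}\}_{h\in\ZZ^d}$; writing $\{\lambda_{d,j}\}_{j\ge 1}$ for their non-increasing rearrangement, one has $e^{\rm avg}(n,d)^2=\sum_{j>n}\lambda_{d,j}$, $e^{\rm avg}(0,d)^2=\prod_{k=1}^d c_k$ with $c_k=1+2\sum_{m\ge 1}\omega^{a_k m^{b_k}}$, and the identity $n^{{\rm avg,NOR}}(\vz,d)=n^{{\rm avg,ABS}}(\vz\,e^{\rm avg}(0,d),d)$ reduces NOR to ABS once $e^{\rm avg}(0,d)$ is controlled; since $\lambda_0=1$ we also have $e^{\rm avg}(0,d)\ge 1$ and hence $n^{{\rm avg,NOR}}(\vz,d)\le n^{{\rm avg,ABS}}(\vz,d)$.

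Parts (iii), (iv)-ABS, and the $0<s\le 1$ portions of (i) and (ii) follow by invoking the \cite{X2,LXD} correspondence together with \thmref{thm1} and the known ABS classification (EC-$(s,t)$-WT with $\max(s,t)>1$ always holds and EC-WT $\Leftrightarrow \lim a_j=\infty$). For (i) with $s>1,\,t>1$ I would use a direct product upper bound: retaining all $h$ with $|h_k|\le M_k\sim((d+\ln\vz^{-1})/a_k)^{1/b_k}$ makes the tail $\sum_{h\text{ discarded}}\lambda_h$ at most $\vz^2$ and yields $\ln n^{{\rm avg,ABS}}(\vz,d)\le C d\ln(d+\ln\vz^{-1})=o((\ln\vz^{-1})^s+d^t)$, and this transfers to NOR via the monotonicity above. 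For the $s\ge 1,\,t=1$ portion of (ii), EC-WT implies EC-$(s,1)$-WT by monotonicity of $(\ln\vz^{-1})^s$ in $s$ (for $\ln\vz^{-1}\ge 1$), and conversely $a_j\not\to\infty$ forces $a_j\le A$ uniformly, so $c_k\ge 1+2\omega^A=c_*>1$ and $\prod_{k=1}^d c_k\ge c_*^d$; since the probability distribution $p_m\propto(\text{level-}m\text{ count})\cdot\omega^{Am}$ concentrates at a level $\Theta(d)$ whose cumulative lattice-point count is $\gtrsim c_{**}^d$ for some $c_{**}>1$, one gets $\ln n^{{\rm avg},X}(\vz,d)\gtrsim d$ uniformly in $\vz$ and breaks EC-$(s,1)$-WT for every $s\ge 1$. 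For NOR in (iv), condition \eqref{1.11} with $s<1$ forces $a_j\gtrsim j^{(1-s)/s}$ with positive exponent, so $\sum_k\omega^{a_k}<\infty$, $e^{\rm avg}(0,d)=O(1)$, and NOR $\Leftrightarrow$ ABS.

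Part (v) is the substantive new content. For sufficiency, \eqref{1.12} yields $a_j\ge((1-t+\epsilon)/|\ln\omega|)\ln j$ for any $\epsilon>0$ eventually; a refined tail estimate that splits $\sum_{\lambda_h\le\tau}\lambda_h$ according to the coordinate realising the largest cost, together with a sharp lattice-point count for $\{h:\sum a_k|h_k|^{b_k}<T\}$ at $T=2\ln\vz^{-1}/|\ln\omega|$, is enough to give $\ln n^{{\rm avg},X}(\vz,d)=o((\ln\vz^{-1})^s+d^t)$ for $s>1$, $t<1$ in both ABS and NOR. For necessity, supposing \eqref{1.12} fails I select an infinite $J\subset\NN$ with $a_j\lesssim((1-t)/|\ln\omega|)\ln j$ for $j\in J$; then testing at $d=j\in J$ with $\vz_d$ chosen so that $(\ln\vz_d^{-1})^s\asymp d^t$, I combine the contributions of the lattice points $h\in\{-1,0,1\}^d$ (where $\lambda_h\ge\omega^{\sum_{k=1}^d a_k}$) with the growth of $\prod c_k$ arising for NOR to lower-bound $\ln n^{{\rm avg},X}(\vz_d,d)$ at order $d^t$, contradicting EC-$(s,t)$-WT. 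The main obstacle will be this necessity step: any one-coordinate lower bound gives only $\ln n\gtrsim\ln d$, which is crushed by both $(\ln\vz^{-1})^s$ ($s>1$) and $d^t$ ($t<1$), so the argument must exploit the full $d$-dimensional lattice and align the test sequence $(\vz_d,d)$ precisely with the quantitative failure rate encoded in \eqref{1.12}.
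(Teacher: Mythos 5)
Your overall architecture --- the ABS worst-case/average-case transfer from \cite{X2,LXD}, the reduction of NOR to ABS via $n^{\rm avg,NOR}(\varepsilon,d)=n^{\rm avg,ABS}(e^{\rm avg}(0,d)\varepsilon,d)$, and Theorem \ref{thm1} --- is the paper's, and parts (i), (ii), (iii) and the ABS half of (iv) go through essentially as you describe (your direct counting bound for (i) and your eigenvalue-mass argument for the necessity in (ii) are workable substitutes for the paper's appeal to monotonicity in $(s,t)$ and to \cite[Theorem 5.1]{LX3}). But there are genuine gaps. First, your necessity argument for NOR in (iv) is circular: you derive ``NOR $\Leftrightarrow$ ABS'' from $e^{\rm avg}(0,d)=O(1)$, which you obtain by assuming \eqref{1.11} --- exactly the statement to be proved in that direction. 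The correct route (the paper's \eqref{3.16}--\eqref{3.17}) uses the unconditional bound $\ln e^{\rm avg}(0,d)\le\tfrac{M_1}{2}\sum_{k\le d}\omega^{a_k}=O(d)$ from \eqref{2.4}: since EC-$(s,t)$-WT with $t\le1$ implies EC-$(s,1)$-WT, one may take $t=1$, and then replacing $\varepsilon$ by $e^{\rm avg}(0,d)\varepsilon$ changes the denominator $(\ln\varepsilon^{-1})^s+d$ only by a bounded factor, so NOR tractability transfers to ABS, where the ABS characterization applies.

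Second, part (v). For sufficiency, your opening deduction that \eqref{1.12} ``yields $a_j\ge((1-t+\epsilon)/|\ln\omega|)\ln j$ for any $\epsilon>0$'' is false: take $a_j=\bigl((1-t)\ln j+2\ln\ln j\bigr)/\ln\omega^{-1}$, for which $j^{1-t}a_j\omega^{a_j}\asymp 1/\ln j\to0$ while the $\epsilon$-bound fails for every $\epsilon>0$. Condition \eqref{1.12} sits exactly at the threshold $a_j\ln\omega^{-1}-(1-t)\ln j-\ln a_j\to\infty$, and since the theorem is an iff you cannot establish sufficiency only under a strictly stronger growth hypothesis. The mechanism you are missing is the one the paper imports from \cite{CWZ}: the bound $n^{\rm avg,ABS}(\varepsilon,d)\le\varepsilon^{-2(1-s_d)/s_d}\bigl(\sum_k\lambda_{d,k}^{1-s_d}\bigr)^{1/s_d}$ with $s_d\asymp(\ln^+(1/u_d))^{-1}$ and $u_d=\max(\omega^{a_d},1/(2d))$, which gives $\ln n^{\rm avg,ABS}(\varepsilon,d)\le\tfrac{2}{s_d}\ln\varepsilon^{-1}+\tfrac{C}{s_d}\sum_{k\le d}u_k$; the cross term $\tfrac{1}{s_d}\ln\varepsilon^{-1}\lesssim\ln(2d)\,\ln\varepsilon^{-1}$ is then absorbed into $(\ln\varepsilon^{-1})^s+d^t$ by Young's inequality, and this is precisely where the hypothesis $s>1$ enters --- a point absent from your sketch. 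For necessity in (v) you flag the obstacle yourself; the paper avoids any $d$-dimensional lattice computation by observing that EC-$(s,t)$-WT implies classical $(s,t)$-WT and quoting the known necessity of \eqref{1.12} for the latter (\cite[Theorem 4.7]{CW}). As written, your plan does not close either half of (v).
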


The paper is organized as follows. In Section 2 we give some
necessary preliminaries  in the worst and average case settings.
     In  Section 3, we give the proofs of Theorems 1.1 and 1.2.

\section{Preliminaries}

For a fixed $\oz\in (0,1)$, let $K_{d,\a,\b}$ be the analytic
Korobov kernel given by \eqref{1.2} with $\a,\b$ satisfying
\eqref{1.1}. By \eqref{1.1-1} we know that the reproducing kernel
Hilbert space $H(K_{d,\a,\b} )$ is a tensor product of the
univariate reproducing kernel Hilbert spaces $H(K_{1,a_j,b_j}),\
j=1,\dots,d$ with reproducing kernels $K_{1,a_j,b_j}$, i.e.,
$$H(K_{d,\a,\b} )=H(K_{1,a_1,b_1})\otimes H(K_{1,a_2,b_2})\otimes
H(K_{1,a_d,b_d}).$$

From \cite{NW1} we know  that $e^{\rm wor}(n,d)$ depends on the
eigenpairs of the operator
$$W_d={\rm APP}_d^*\,{\rm APP}_d : H(K_{d,\a,\b} )\mapsto H(K_{d,\a,\b}
),$$where ${\rm APP}_d$ is given by \eqref{1.3-1}. We have
$$W_df=\sum_{\h\in\Bbb Z^d}\oz_\h \langle f, e_\h\rangle_{H(K_{d,\a,\b}
)}\,e_\h$$with $$e_\h(\x)=(\oz_\h)^{1/2}\exp(2\pi i \h\cdot \x).$$
This means that $\{(\oz_\h, e_\h)\}_{\h\in\Bbb Z^d}$ are the
eigenpairs of $W_d$, i.e.,
$$W_d\, e_\h=\oz_\h\, e_\h, \ \ \ {\rm for \ all}\ \h\in\Bbb
Z^d,$$and $\{e_\h\}_{\h\in \Bbb Z^d}$ is an orthonormal basis for
$H(K_{d,\a,\b} )$.

Let $\{(\lz_{d,j},\eta_{d,j})\}_{j\in\Bbb N}$ be the rearrangement
of the eigenpairs $\{(\oz_\h, e_\h)\}_{\h\in\Bbb Z^d}$, such that
the eigenvalues $\oz_\h,\ \h\in \Bbb Z^d$ are arranged in
decreasing order, i.e.,
$$\lz_{d,1}\ge \lz_{d,2}\ge \cdots\lz_{d,k}\ge\cdots\ge 0.$$

From \cite[p. 118]{NW1}
 we get   that the $n$th minimal worst case
error is
$$e^{\rm wor}(n,d)=(\lz_{d,n+1})^{1/2},$$
and  it is achieved by the algorithm
$$A_{n,d}^*f=\sum_{k=1}^n \lz_{d,k}\langle f, \eta_{d,k} \rangle_{H(K_{d,\a,\b}
)}\, \eta_{d,k}.$$

Since $\lz_{d,1}=\oz_0=\prod_{k=1}^d\lz(k,1)=1$, we get that  the
normalized error criterion and the absolute error criterion
coincide. Then the information complexity $n(\vz,d)$ of APP
satisfies
$$n(\vz,d)=\min\{n\in\Bbb N\ |\ e^{\rm wor}(n,d)\le \vz\}=\min\{n\in\Bbb N\ |\ \lz_{d,n+1}\le \vz^2\}, $$
or equivalently, the number of eigenvalues
$\{\lz_{d,j}\}_{j\in\Bbb N}=\{\oz_\h\}_{\h\in\Bbb Z^d}$ of the
operator $W_d$ greater than $\vz^2$. Due to \eqref{1.3}, we can
rewrite the information complexity as
\begin{align}\label{2.1}n(\vz,d)&=\#\big\{\h\in \Bbb Z^d\ |\ \oz_{\h}=\oz^{\sum_{k=1}^d a_k|h_k|^{b_k}} >\vz^2\big\}\notag\\ &=\#\Big\{\h\in \Bbb Z^d\ |\
\sum\limits_{k=1}^d a_k|h_k|^{b_k}<\frac{\ln \vz^{-2}}{\ln
\oz^{-1}}\Big\}, \end{align} where $\# A$ represents the number of
elements in a set $A$.

\vskip 1mm

Next we can give explicit formulas for the $n$th minimal average
case error $e^{avg}(n,d)$ and the corresponding $n$th optimal
algorithm, see \cite[Section 4.3]{NW1}. We recall that the space
$C([0,1]^d)$  is equipped with a zero-mean Gaussian measure
$\mu_d$ whose covariance kernel is given by  the analytic Korobov
kernel $K_{d,\a,\b}$. Let $$C_{\mu_d}:
\big(C([0,1]^d)\big)^*\mapsto C([0,1]^d)$$  denote the covariance
operator of $\mu_d$, as defined in \cite[Appendix B]{NW1}. Then
the induced measure $\nu_d = \mu_d (I_d)^{-1}$ is a zero-mean
Gaussian measure on the Borel sets of $L_2([0,1]^d)$, with
covariance operator $C_{\nu_d}: L_2([0,1]^d)\mapsto C([0,1]^d)$
given by $$ C_{\nu_d}=I_d \,C_{\mu_d}\,(I_d)^*,$$ where $I_d$ is
defined by \eqref{1.9}, $(I_d)^*: L_2([0,1]^d) \mapsto
\big(C([0,1]^d)\big)^*$ is the operator dual to $I_d$. It is
well-known that $C_{\nu_d}$ is a self-adjoint nonnegative-definite
operator with finite trace on $L_2([0,1]^d)$ and for any $f\in
L_2([0,1]^d)$,
$$C_{\nu_d}f(x)=\int_{[0,1]^d} K_{d,\a,\b}(x,y)f(y)dy.$$
Then $\{(\oz_\h, \tilde e_\h)\}_{\h\in\Bbb Z^d}$ are the
eigenpairs of $C_{\nu_d}$ with $\tilde e_\h(\x)=\exp(2\pi i
\h\cdot \x)$, i.e.,
$$C_{\nu_d}\, \tilde e_\h=\oz_\h\, \tilde e_\h, \ \ \ {\rm for \ all}\ \h\in\Bbb
Z^d,$$and $\{\tilde e_\h\}_{\h\in \Bbb Z^d}$ is an orthonormal
basis for $L_2([0,1]^d)$.

Let $\{\lz_{d,j}\}_{j\in \Bbb N}$ be the non-increasing
rearrangement of  $\{\oz_\h\}_{\h\in\Bbb Z^d}$ just as in the
worst case setting. Then the eigenvalues of the covariance
operator $C_{\nu_d}$  are just $\lz_{d,j},\ {j\in \Bbb N}$. Denote
by $\xi_{d,j}$ the corresponding eigenvector of $C_{\nu_d}$ with
respect to the eigenvalue $\lz_{d,j}$.  Then  the $n$th minimal
average case error $e^{\rm avg}(n,d)$ is (see \cite{NW1})
$$e^{\rm avg}(n,d)=\Big(\sum_{k=n+1}^\infty \lz_{d,k}\Big)^{1/2}\ge e^{\rm wor}(n,d).$$
and it is achieved by the algorithm
$$A_{n,d}^{**}f=\sum_{k=1}^n \langle I_d f, \xi_{d,k}
\rangle_{L_2([0,1]^d)}\, \xi_{d,k}.$$

The average case information complexity  can be studied using
either ABS or NOR. Then we define the worst case information
complexity $n^{{\rm wor}, X}(\va ,d)$ for $X\in \{{\rm ABS,\,
NOR}\}$ as
\begin{equation*}
n^{{\rm avg}, X}(\va ,d)=\min\{n:\,e^{\rm avg}(n,d)\leq \va
CRI_d\},
\end{equation*}where
\begin{equation*}
CRI_d=\left\{\begin{split}
 &\ \ 1, \; \quad\qquad\text{ for $X$=ABS,} \\
 &e^{\rm avg}(0,d), \text{ for $X$=NOR}
\end{split}\right. \ \ =\ \ \left\{\begin{split}
 &\ 1, \; \quad\text{ for $X$=ABS,} \\
 &\Big(\sum_{j=1}^\infty \lz_{d,j}\Big)^{1/2},\ \text{ for $X$=NOR.}
\end{split}\right.
\end{equation*}

 Obviously, we have \begin{equation}\label{2.2} n^{\rm avg, NOR}(\va ,d)\le
 n^{\rm avg, ABS}(\vz,d)=n^{\rm avg, NOR}((e^{\rm avg}(0,d))^{-1}\vz ,d).\end{equation}

We remark that the eigenvalues of the  operator $W_d$ or
$C_{\nu_d}$ are given by
$$\big\{\lz_{d,j} \big\}_{j\in \Bbb N}=\big\{\oz_\h\}_{\h\in\Bbb Z^d}=\big\{\lz(1, j_1)\lz(2, j_2)\dots\lz(d, j_d) \big\}_{(j_1,\dots, j_d )\in\Bbb N^d},$$
where $\lz(k,1)=1$, and
$$\lz(k,2j)=\lz(k,2j+1)=\oz^{a_kj^{b_k}},\quad j\in\Bbb N,\ 1\le k\le d.$$
This implies that for any $\tau_0>0$ and $\tau>\tau_0$,
\begin{align*} \sum_{j\in \Bbb
N}\lz^{\tau}_{d,j}&=\prod_{k=1}^d\sum_{j=1}^\infty
\lz(k,j)^\tau=\prod_{k=1}^d\Big(1+2\sum_{j=1}^\infty \oz^{\tau a_k
j^{b_k}}\Big)\\ &=\prod_{k=1}^d\Big(1+\oz^{\tau a_k}
2\sum_{j=1}^\infty \oz^{\tau a_k
(j^{b_k}-1)}\Big)=\prod_{k=1}^d\Big(1+\oz^{\tau
a_k}H(k,\tau)\Big),
\end{align*}where $$1\le H(k,\tau)=2\sum_{j=1}^\infty \oz^{\tau
a_k (j^{b_k}-1)}\le 2\sum_{j=1}^\infty \oz^{\tau a_1
(j^{b_*}-1)}\le 2\sum_{j=1}^\infty \oz^{\tau_0 a_1 (j^{b_*}-1)}.$$
Since
$$\oz^{\tau_0 a_1(j^{b_*}-1)}=j^{-\frac{\tau_0 a_1(j^{b_*}-1)\ln\frac1\oz}{\ln
j}},\ \ {\rm and}\ \ \lim_{j\to\infty}\frac{\tau_0
a_1(j^{b_*}-1)\ln\frac1\oz}{\ln j}=\infty,
$$we get that
$$M_{\tau_0}:=2\sum_{j=1}^\infty \oz^{\tau_0 a_1(j^{b_*}-1)}<\infty.$$
It follows that for any
$\tau>\tau_0>0$,\begin{align}\label{2.3}\ln 2
\sum_{k=1}^d\oz^{\tau a_k}&\le \sum_{k=1}^d \ln(1+\oz^{\tau
a_k})\le
 \ln \Big(\sum_{j\in \Bbb N}\lz^{\tau}_{d,j}\Big)\\ &=\sum_{k=1}^d\ln \big(1+\oz^{\tau a_k}H(k,\tau)) \le \ln(1+M_{\tau_0} \oz^{\tau a_k}) \le M_{\tau_0}
  \sum_{k=1}^d\oz^{\tau a_k},
 \notag\end{align}
where in the first inequality we used the inequality $\ln(1+x)\ge
x\ln2,\ x\in[0,1]$, and in the last inequality we used the
inequality $\ln(1+x)\le x, \ x>0$. By \eqref{2.3} we have
\begin{equation}\label{2.4} \frac{\oz^{a_1}\ln2}2\le\frac{\ln2}2\sum_{k=1}^d\oz^{ a_k}\le  \ln(e^{\rm avg}(0,d))=
\frac12\ln \Big(\sum_{j\in \Bbb N}\lz_{d,j}\Big)\le\frac{ M_1
}2\sum_{k=1}^d\oz^{ a_k}\le \frac{d M_1\oz^{ a_1}
}2.\end{equation}

\section{Proofs of Theorems \ref{thm1} and \ref{thm2}}

In order to prove Theorem 1.1, we shall use  the estimates of
 entropy numbers of   $\ell_p^d$-unit balls with $\ell_\infty^d$-balls. Such method is firstly
used in \cite{KMU}.

Let $\ell_{p}^d\ (0< p\le\infty)$ denote the space $\Bbb R^d$
equipped with the $\ell_{p}^d$-norm  defined by
$$\|\x\|_{\ell_{p}^d}:=\bigg\{\begin{array}{ll}\big(\sum_{i=1}^d
|x_i|^p \big)^{\frac 1p},\ \ \ &0< p<\infty;\\ \max_{1\le i\le d}
|x_i|, &p=\infty.\end{array}$$The unit ball of $\ell_{p}^d$ is
denoted by $B\ell_{p}^d$.

Let $A\subset \Bbb R^d$. An {\it $\vz$-net} for $A$ is a discrete
set of points $\x_1, \x_2,\dots, \x_n$ in $\Bbb R^d$ such that
$$ A\subset \bigcup_{i=1}^n( \x_i+\vz\, B\ell_\infty^d).$$
The covering number $N_\vz(A)$ is the minimal natural number $n$
such that there is an $\vz$-net for $A$ consisting of $n$ points.
Inverse to the covering numbers $N_\vz(A)$ are the (nondyadic)
entropy numbers
$$\vz_n(A,\ell_\infty^d):=\inf\{\vz>0\ |\ N_\vz(A)\le n\}.$$

Points $\y_1,\y_2,\dots,\y_m$ in $\Bbb R^d$ are called {\it
$\vz$-distinguishable} if the $\ell_\infty$ distances between any
two of them exceeds $\vz$, i.e.,
$$\|\y_i-\y_k\|_{\ell_\infty^d}>\vz\ \ \ {\rm for \ all}\  i\neq
k,\ 1\le i,k\le m.$$ Let $M_\vz(A)$ be the maximal natural number
$m$ such that there is an $\vz$-distinguishable set in $A$
consisting of $m$ points. Then we have (see \cite[Chapter 15,
Proposition 1.1]{LGM})
$$M_{2\vz}(A)\le N_\vz(A)\le M_\vz(A).$$ For $A\subset \Bbb R^d$, let $G(A)$ be the grid number of points in $A$ that lie on the grid $\Bbb Z^d$, i.e.,
$$G(A)=\#(A\cap \Bbb Z^d).$$

In the case $A=B\ell_p^d,\ 0<p<\infty$, the behavior in $n$ and
$d$ of the entropy numbers $\vz_n(B\ell_p^d,\ell_\infty^d)$ is
completely understood (see \cite{ET, K, LGM, S}). It follows that
for $0< p<\infty$ and $\vz\in(0,1)$,
\begin{equation}\label{3.0-0}\ln (N_\vz(B\ell_p^d))\le C_p
\bigg\{\begin{array}{ll}\vz^{-p}\ln(2d\vz^p), &d\vz^p\ge 1,\\ d\ln
(2(d\vz^{p})^{-1}), &d\vz^p\le1,
\end{array}\end{equation}where $C(p)$ is depending only on $p$,
but independent of $d$ and $\vz$.

\begin{lem}For $0<p<\infty$ and $m\ge 1$, we have
\begin{equation}\label{3.0} \ln \Big(\#\Big\{\h\in\Bbb Z^d\ \big|\ \sum_{k=1}^d|h_k|^p\le m\Big\}\Big)\le C_p\bigg\{\begin{array}{ll}m\ln (\frac{2d}m), &d\ge m,\\
d\ln (\frac{2m}d), &m\ge d,
\end{array}\end{equation}where $C_p$ is a constant depending only
on $p$, but independent of $d$ and $m$.\end{lem}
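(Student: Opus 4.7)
The plan is to realize the left-hand side of \eqref{3.0} as a lattice point count inside a scaled $\ell_p$-ball and bound this count by a covering number of the ball, so that the entropy estimate \eqref{3.0-0} applies directly.

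First I would note the tautology
\begin{equation*}
\Big\{\h\in\ZZ^d\ \big|\ \sum_{k=1}^d|h_k|^p\le m\Big\} \;=\; m^{1/p}\, B\ell_p^d\cap\ZZ^d.
\end{equation*}
Since distinct points of $\ZZ^d$ are pairwise at $\ell_\infty^d$-distance at least $1$, the lattice points lying in any bounded set $A$ form a $(1/2)$-distinguishable family, so $G(A)\le M_{1/2}(A)\le N_{1/4}(A)$ by the inequality $M_{2\varepsilon}(A)\le N_\varepsilon(A)$ recalled just before the lemma. Applied to $A=m^{1/p}B\ell_p^d$ and rescaled, this yields
\begin{equation*}
G\bigl(m^{1/p}B\ell_p^d\bigr)\;\le\; N_{\varepsilon_0}\bigl(B\ell_p^d\bigr),\qquad \varepsilon_0:=\tfrac14\, m^{-1/p}\in (0,1),
\end{equation*}
where $\varepsilon_0<1$ follows from $m\ge 1$.

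Next I would feed $\varepsilon_0$ into \eqref{3.0-0}. Since $\varepsilon_0^{-p}=4^p m$ and $d\varepsilon_0^p = d/(4^p m)$, the two branches of \eqref{3.0-0} are separated by $d=4^p m$. If $d\ge 4^p m$, the first branch gives
\begin{equation*}
\ln G\;\le\; C_p\, 4^p\, m\,\ln\bigl(2d/(4^p m)\bigr)\;\le\; C_p\, 4^p\, m\,\ln(2d/m),
\end{equation*}
which matches the $d\ge m$ branch of \eqref{3.0}. If $d\le 4^p m$, the second branch gives $\ln G\le C_p\, d\,\ln(2\cdot 4^p m/d)$; when $m\ge d$ this is at most $C_p'\, d\,\ln(2m/d)$ since $\ln(2m/d)\ge \ln 2$ absorbs the extra $p\ln 4$, while when $m\le d\le 4^p m$ the quantities $d$ and $m$ are comparable and $C_p\, d\,\ln(2\cdot 4^p)\le C_p''\, m\le (C_p''/\ln 2)\, m\,\ln(2d/m)$, recovering the $d\ge m$ branch.

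The only genuine obstacle is the mismatch between the threshold $d=4^p m$ arising from \eqref{3.0-0} and the threshold $d=m$ appearing in the statement; it is handled by the short comparability argument in the intermediate band $m\le d\le 4^p m$. Taking $C_p$ in \eqref{3.0} to be the maximum of the three constants produced above then completes the proof.
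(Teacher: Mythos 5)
Your proof is correct and follows essentially the same route as the paper: identify the lattice-point count with $G(m^{1/p}B\ell_p^d)$, bound it by $N_{m^{-1/p}/4}(B\ell_p^d)$ via the distinguishability inequality $M_{2\vz}(A)\le N_\vz(A)$, and invoke the entropy estimate \eqref{3.0-0}. The only difference is that you explicitly carry out the constant-chasing for the mismatch between the threshold $d=4^pm$ coming from \eqref{3.0-0} and the threshold $d=m$ in the statement, a detail the paper's proof silently absorbs into $C_p$.
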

\begin{proof} We set
$A=m^{1/p}B\ell_p^d$. Then $$G(A)=\#(A\cap \Bbb
Z^d)=\#\Big\{\h\in\Bbb Z^d\ \big|\ \sum_{k=1}^d|h_k|^p\le
m\Big\}.$$ For $m\ge1$,  $A\cap \Bbb Z^d$ is
$\rho$-indistinguishable for any $\rho\in(1/2,1)$ in $A$. This
means that
$$G(A)\le M_\rho(A)\le N_{\rho/2}(A)\le N_{1/4}(m^{1/p}B\ell_p^d)= N_{m^{-1/p}/4}(B\ell_p^d).$$
By \eqref{3.0-0} we obtain that
$$\ln G(A)\le \ln\Big(N_{m^{-1/p}/4}(B\ell_p^d)\Big)\le C_p\bigg\{\begin{array}{ll}m\ln (\frac{2d}m), &d\ge m,\\
d\ln (\frac{2m}d), &m\ge d.
\end{array} $$Lemma 3.1 is proved.
\end{proof}

\begin{cor}For $0<p<\infty$ and $m\ge 1$, we have
\begin{equation}\label{3.0.0} \ln \Big(\#\Big\{\h\in\Bbb Z^d\ \big|\ \sum_{k=1}^d|h_k|^p\le m\Big\}\Big)\le C_p d\Big(\ln (2d)+\ln (2m)\Big).\end{equation}\end{cor}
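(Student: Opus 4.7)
The plan is to derive the uniform bound directly from Lemma 3.1 by showing that each branch of its piecewise estimate is dominated by the symmetric right-hand side $C_p\, d(\ln(2d)+\ln(2m))$. The purpose of Corollary 3.2 is evidently to trade the sharper two-regime bound of Lemma 3.1 for a single cleaner form that does not branch on the relative size of $d$ and $m$, which will be more convenient when substituting into the information-complexity formula \eqref{2.1} later on.

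In the regime $d\ge m$, Lemma 3.1 gives the bound $C_p\, m\ln(2d/m)$. Using $m\le d$ together with $\ln(2d/m)\le \ln(2d)$ (valid since $m\ge 1$), this is at most $C_p\, d\ln(2d)$, and hence at most $C_p\, d(\ln(2d)+\ln(2m))$ because $\ln(2m)\ge 0$. In the complementary regime $m\ge d$, Lemma 3.1 gives $C_p\, d\ln(2m/d)$, and since $d\ge 1$ one has $\ln(2m/d)\le \ln(2m)$, so this is at most $C_p\, d\ln(2m)\le C_p\, d(\ln(2d)+\ln(2m))$. Combining the two cases yields the claim with the same constant $C_p$ as in Lemma 3.1.

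There is no genuine obstacle here: Corollary 3.2 is simply a symmetric, slightly loose repackaging of Lemma 3.1, designed so that the bound reads nicely regardless of whether $d$ or $m$ is the dominant variable. The only small care needed is to observe that adding the nonnegative term $\ln(2m)$ (respectively $\ln(2d)$) to the exceeding branch preserves the inequality, which is immediate for $d,m\ge 1$.
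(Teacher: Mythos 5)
Your proof is correct and is precisely the derivation the paper intends: the corollary is stated without proof as an immediate consequence of Lemma 3.1, and your case analysis (bounding $m\ln(2d/m)\le d\ln(2d)$ when $d\ge m$ and $d\ln(2m/d)\le d\ln(2m)$ when $m\ge d$, then adding the nonnegative missing term) is the obvious way to fill it in. No issues.
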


\noindent{\it \textbf{Proof of Theorem \ref{thm1}}.}

\

(i) Suppose that EC-$(1,t)$-WT with $t<1$ holds for APP. We want
to show \eqref{1.7}. It follows that  EC-WT  holds also and hence
$\lim\limits_{j\to\infty}a_j=\infty$. By \eqref{2.1} we have
\begin{align*} n(\vz,d)& =\#\Big\{\h\in \Bbb Z^d\ |\
\sum\limits_{k=1}^d a_k|h_k|^{b_k}<\frac{\ln \vz^{-2}}{\ln
\oz^{-1}}\Big\}\notag\\ &\ge \#\Big\{\h\in \{-1, 0,1\}^d\ |\
\sum\limits_{k=1}^d a_k|h_k|<\frac{\ln \vz^{-2}}{\ln
\oz^{-1}}\Big\}\notag \\ &\ge \#\Big\{\h\in \{-1, 0,1\}^d\ |\
\sum\limits_{k=1}^d |h_k|<\frac{\ln \vz^{-2}}{a_d\ln
\oz^{-1}}\Big\}\notag\\  &=\#\Big\{\h\in \{-1, 0,1\}^d\ |\
\sum\limits_{k=1}^d |h_k|\le m \Big\}\notag\\ &=
\bigg\{\begin{matrix}
 \ \ 3^d, &\quad m\ge d ,\\
 \sum_{j=0}^m 2^j\binom dj, \qquad &0\le m\le d,
\end{matrix}  \end{align*}
where $$m=\Big\lceil \frac{\ln \vz^{-2}}{a_d\ln
\oz^{-1}}\Big\rceil-1.$$ It follows by the inequality $$\binom
{m+d}m\ge \max\Big\{ \Big(1+\frac md\Big)^d,\ \Big(1+\frac
dm\Big)^m\Big\}$$that for $1\le m<d$,
\begin{equation}\label{3.1}n(\vz,d)\ge \binom dm \ge \Big(\frac dm\Big)^m.\end{equation}

Set $\vz=\vz_d\in(0,1)$ such that
$$\frac{\ln \vz^{-2}}{a_d\ln
\oz^{-1}}=d^t$$for sufficiently large $d\in\Bbb N$. Then we have
$$m\le \frac{\ln \vz^{-2}}{a_d\ln
\oz^{-1}}=d^t\le m+1. $$This yields $$\ln \frac dm\ge \ln
d^{1-t}=(1-t)\ln d,$$and
$$\ln \vz^{-1}\le \frac12\,\ln\frac 1\oz\, a_d\,(m+1).$$
Since EC-$(1,t)$-WT with $t<1$ holds, we have
\begin{align*}0&=\lim_{\varepsilon ^{-1}+d\rightarrow \infty }\frac{\ln
n(\va ,d)}{\ln \va ^{-1}+d^{t }}\\ &\ge \lim_{d\rightarrow \infty
}\frac{m\ln\frac dm}{\frac12\,\ln\frac 1\oz\, a_d\,(m+1)+ (m+1)}\\
&\ge \lim_{d\to\infty}\frac{(1-t)\ln d}{\frac12\,\ln\frac
1\oz\, a_d\,(1+\frac 1m)+ (1+\frac1m)}\\
&=\lim_{d\to\infty}\frac{(1-t)\ln d}{\frac12\,\ln\frac 1\oz\,
a_d}\ge 0 ,\end{align*} which implies that
$$\lim_{d\to\infty}\frac{\ln d}{
a_d}=0,$$and hence \eqref{1.7}.

Next we suppose that \eqref{1.7} holds. We want to show that
EC-$(1,t)$-WT with $t<1$ holds.  By \eqref{2.1} we have
\begin{align*} n(\vz,d)& =\#\Big\{\h\in \Bbb Z^d\ \big|\
\sum_{k=1}^d a_k|h_k|^{b_k}<\frac{\ln \vz^{-2}}{\ln
\oz^{-1}}\Big\}\le \#\Big\{\h\in \Bbb Z^d\ \big|\
\sum\limits_{k=1}^d a_k|h_k|^{b_*}\le \frac{\ln \vz^{-2}}{\ln
\oz^{-1}}\Big\}\notag
\\ &\le \#\Big\{ \h\in
\Bbb Z^{i-1}\ |\ \sum_{k=1}^{i-1}a_k|h_k|^{b_*}\le \frac{\ln
\vz^{-2}}{\ln \oz^{-1}}\Big\}
 \cdot \#\Big\{\h\in \Bbb Z^{d-i+1}\ |\ \sum_{k=i}^d a_k
|h_k|^{b_*}\le \frac{\ln \vz^{-2}}{\ln \oz^{-1}}\Big\}\notag \\
&\le  \#\Big\{ \h\in \Bbb Z^{i-1}\ |\
\sum_{k=1}^{i-1}|h_k|^{b_*}\le \frac{\ln \vz^{-2}}{a_1\ln
\oz^{-1}}\Big\}\cdot \#\Big\{\h\in \Bbb Z^{d-i+1}\ |\ \sum_{k=i}^d
|h_k|^{b_*}\le \frac{\ln \vz^{-2}}{a_i\ln \oz^{-1}}\Big\}.\notag
\end{align*}
It follows that \begin{align*}\ln\,n(\vz,d)&\le \ln \Big(\#\Big\{
\h\in \Bbb Z^{i-1}\ |\ \sum_{k=1}^{i-1}|h_k|^{b_*}\le \frac{\ln
\vz^{-2}}{a_1\ln \oz^{-1}}\Big\}\Big)\\ & \qquad +\ln \Big(
\#\Big\{\h\in \Bbb Z^{d-i+1}\ |\ \sum_{k=i}^d |h_k|^{b_*}\le
\frac{\ln \vz^{-2}}{a_i\ln \oz^{-1}}\Big\}\Big)\\ &=:
term_1+term_2.\end{align*} By \eqref{3.0.0} we have
\begin{align*}term_1& \le (i-1)\Big\{\ln \big[2(i-1)\big]+ \ln
\Big(\frac{2\ln \vz^{-2}}{a_1\ln \oz^{-1}}\Big)\Big\}.\end{align*}
We set
$$y=\max(d^t,\ln\vz^{-1}),\ \ \ \dz\in (0,1),\ \ \ {\rm and}\ \ \ i=\min(d+1, 1+\lfloor y^{1-\dz}\rfloor).
$$Then we have $$i-1\le y^{1-\dz},\ \ \  \ln \vz^{-1}\le y\le \ln
\vz^{-1}+d^t,$$ and $y\to \infty$ as $\vz^{-1}+d\to\infty$. It
follows that
\begin{align}\label{3.4}\frac{term_1}{\ln \vz^{-1}+d^t}\le \frac
{\ln (2y^{1-\dz})+ \big[\ln (4y)-\ln({a_1\ln
\oz^{-1}})\big]}{y^\dz}\longrightarrow 0,\end{align}as
$y\to\infty$.

Now we deal with $term_2$. Note that if $d\le \lfloor
y^{1-\dz}\rfloor$, then $i=d+1$ and then $term_2=0$. Hence we can
assume that $d>\lfloor y^{1-\dz}\rfloor$. Then $i\le d$ and both
$d$ and $i$ go to infinity with $y$, and hence $a_i\to\infty$.

 If
$m=\frac{\ln \vz^{-2}}{a_i\ln \oz^{-1}}\ge (d-i+1)$, then by
\eqref{3.0} we get
\begin{align} \label{3.5}\frac{term_2}{\ln \vz^{-1}+d^t}&\le\frac {C(d-i+1)\ln (2t)}y\le \frac {C\ln
\vz^{-2} }{ya_i\ln \oz^{-1}}\frac{\ln(2t)}t\le
\frac{2C}{a_i\ln\oz^{-1}}\longrightarrow 0,
\end{align}
as $\vz^{-1}+d\to\infty$, where $t=\frac m{d-i+1}\ge 1$, and in
the last inequality we used $$\ln(2t)\le t\ \ \ {\rm for}\ \
t\ge1.$$

If $m=\frac{\ln \vz^{-2}}{a_i\ln \oz^{-1}}<1$, then $term_2=0$. We
omit this case. If $1\le m=\frac{\ln \vz^{-2}}{a_i\ln \oz^{-1}}\le
(d-i+1)$, then by \eqref{3.0} we get
\begin{align*} \frac{term_2}{\ln \vz^{-1}+d^t}&\le \frac{Cm\ln(\frac{2(d-i+1)}{m})}{y}\le
\frac{2C\ln\big(\frac {2d a_i\ln\oz^{-1}}{\ln\vz^{-2}}
\big)}{a_i\ln\oz^{-1}}\notag\\ &\le \frac{2C}{\ln\oz^{-1}}\cdot
\frac{\ln2+\ln d+\ln a_i +\ln(\ln\oz^{-1})}{a_i},
\end{align*}
Note that $$i=1+\lfloor y^{1-\dz}\rfloor\ge y^{1-\dz}\ge
d^{t(1-\dz)}.$$ It follows by \eqref{1.7} that
$$\lim_{i\to\infty}\frac{\ln d}{a_i}\le \frac1{t(1-\dz)}\lim_{i\to\infty} \frac{\ln i}{a_i}=0.$$
We continue to obtain that
\begin{equation} \label{3.6}\frac{term_2}{\ln \vz^{-1}+d^t}\le \frac{2C}{\ln\oz^{-1}}\cdot
\frac{\ln2+\ln d+\ln a_i +\ln(\ln\oz^{-1})}{a_i}\longrightarrow 0,
\end{equation}as $i\to\infty$. By \eqref{3.4},
\eqref{3.5}, and \eqref{3.6}, we obtain
$$\frac{\ln n(\vz,d)}{\ln \vz^{-1}+d^t}\le \frac{term_1+term_2}{\ln \vz^{-1}+d^t}\longrightarrow
0,$$ as $\vz^{-1}+d\to\infty$. This means that EC-$(1,t)$-WT with
$t<1$ holds for APP if \eqref{1.7} holds. Theorem 1.1 (i) is
proved.

\

(ii) Suppose that EC-$(s,t)$-WT with $s<1$ and $t\leq 1$  holds
for APP. We want to prove \eqref{1.8}. Set $\vz=\vz_d\in(0,1)$ for
sufficiently large $d\in\Bbb N$ such that
$$m\le \frac{\ln \vz^{-2}}{a_d\ln
\oz^{-1}}=\frac d2\le m+1.$$ This gives that $$\frac dm\ge2, \ \
 \ {\rm and} \ \ \ \ln \vz^{-1}\le \frac12\,\ln \oz^{-1}\, a_d\,(m+1).$$

 Since EC-$(s,t)$-WT with $s<1$ and $t\leq 1$  holds,  by \eqref{3.1} we have
\begin{align*}0&=\lim_{\varepsilon^{-1}+d\rightarrow \infty
}\frac{\ln n(\va ,d)}{(\ln \va ^{-1})^s+d^{t }}\\ &\ge
\lim_{d\rightarrow \infty }\frac{m\ln\frac dm}{(\frac12\,\ln\frac
1\oz\, a_d\,(m+1))^s}\\ &\ge \lim_{d\to\infty}\frac{m^{1-s}\ln
2}{(\frac12\,\ln\frac
1\oz\, a_d\,(1+\frac 1m))^s}\\
&=\lim_{d\to\infty}\frac{(\frac d2)^{1-s}\ln 2}{(\frac12\,\ln\frac
1\oz\, a_d)^s}\ge 0 ,\end{align*} which yields that
$$\lim_{d\to\infty}\frac{d^{1-s}}{
a_d^s}=0,$$and hence \eqref{1.8}.

Next we suppose that \eqref{1.8} holds. We want to show that
EC-$(s,t)$-WT with $s<1$ and $t\leq 1$  holds. Set
$$ a_k=k^{\frac{1-s}s}\hat h(k)\ \ \ {\rm and}\ \ \ {\tilde
h(k)}=\inf_{j\ge k}\hat h(j).$$ Then   the sequence $\{{\tilde
h(k)}\}_{k\in\Bbb N}$ is non-decreasing and satisfies $\hat
h(k)\ge \tilde h(k)$ and
$$\lim\limits_{k\to\infty}\tilde
h(k)=\lim\limits_{k\to\infty}\hat h(k)=\infty.$$ We put
$$ h(1)=\tilde h (1),\  \ h(k+1)=\min\{ (1+1/k)h(k), \tilde h(k+1)\}, \ k=1,\dots.$$
Clearly, we have $$h(k)\le (1+1/k)h(k) \ \ {\rm and}\ \  h(k)\le
\tilde h(k)\le \tilde h(k+1),$$ which yields that the sequence
$\{{h(k)}\}_{k\in\Bbb N}$ is non-decreasing. We also note that $$
\hat h(k)\ge \tilde h(k)\ge h(k)$$ and
$$h(2k)\le \frac{2k}{2k-1}h(2k-1)\le \cdots\le \frac{2k}{2k-1} \frac{2k-1}{2k-2}\cdots\frac{k+1}{k}h(k) =2h(k).$$

 If $\vz^{-1}$ is
bounded by  a constant $M$,  then by \eqref{2.1} and \eqref{3.0}
we have
\begin{align*} \frac{\ln n(\vz,d)}{(\ln\vz^{-1})^s+d^t}\le \frac{\ln\big(\#\big\{\h\in \Bbb Z^d\ |\
\sum\limits_{k=1}^d |h_k|^{b_*}\le  \frac{\ln M^2}{a_1\ln
\oz^{-1}}\big\}\big)}{d^t}\le \frac{CM_0\ln \frac{2d}{M_0}}{d^t}
\longrightarrow 0,\end{align*} as $d\to\infty$, where
$M_0=\frac{\ln M^2}{a_1\ln \oz^{-1}}$. In this case, EC-$(s,t)$-WT
with $s<1$ and $t\leq 1$  holds.

Therefore without loss of generality, we may assume that
$\vz^{-1}$ tends to infinity.
 By \eqref{2.1} we get
\begin{align*} n(\vz,d)&\le \#\Big\{\h\in \Bbb Z^d\ |\
\sum\limits_{k=1}^d a_k|h_k|^{b_*}\le \frac{\ln \vz^{-2}}{\ln
\oz^{-1}}\Big\}\notag
\\ &\le \#\Big\{\h\in \Bbb Z^4\ |\
\sum\limits_{k=1}^4 |h_k|^{b_*}\le \frac{\ln \vz^{-2}}{a_1\ln
\oz^{-1}}\Big\}
\\ &\qquad \cdot \#\Big\{\h\in\Bbb Z^{2^{\lceil \log_2 d \rceil}}\ \big|\ \sum_{l=2}^{\lceil \log_2 d \rceil-1}\Big(\sum_{k=2^l+1}^{2^{l+1}}|h_k|^{b_*}\Big)\, a_{2^l}  \le \frac{\ln \vz^{-2}}{\ln
\oz^{-1}}\Big\}\\
&\le  \#\Big\{\h\in \Bbb Z^4\ |\ \sum\limits_{k=1}^4
|h_k|^{b_*}\le \frac{\ln \vz^{-2}}{a_1\ln \oz^{-1}}\Big\}\\
&\qquad \cdot \prod_{l=2}^{\lceil \log_2 d \rceil-1} \#\Big\{\h\in
\Bbb Z^{2^l}\ \big|\ \sum_{k=2^l+1}^{2^{l+1}}|h_k|^{b_*}\le
\frac{\ln \vz^{-2}}{ a_{2^l}\ln \oz^{-1}}\Big\}.\notag
\end{align*}

It follows that \begin{align}\ln\,n(\vz,d)&\le
\ln\Big(\#\Big\{\h\in \Bbb Z^4\ |\ \sum\limits_{k=1}^4
|h_k|^{b_*}\le \frac{\ln \vz^{-2}}{a_1\ln \oz^{-1}}\Big\}\Big)\notag\\
&\qquad + \sum_{l=2}^{\lceil \log_2 d \rceil-1}\ln\Big(
\#\Big\{\h\in \Bbb Z^{2^l}\ \big|\
\sum_{k=2^l+1}^{2^{l+1}}|h_k|^{b_*}\le \frac{\ln
\vz^{-2}}{2^{l\frac{1-s}s}\hat h(2^l)\ln \oz^{-1}}\Big\}\Big)\notag\\
&\le \ln\Big(\#\Big\{\h\in \Bbb Z^4\ |\ \sum\limits_{k=1}^4
|h_k|^{b_*}\le \frac{\ln \vz^{-2}}{a_1\ln \oz^{-1}}\Big\}\Big)\notag\\
&\qquad +  \sum_{l=2}^{\lceil \log_2 d \rceil-1}\ln\Big(
\#\Big\{\h\in \Bbb Z^{2^l}\ \big|\
\sum_{k=2^l+1}^{2^{l+1}}|h_k|^{b_*}\le \frac{\ln
\vz^{-2}}{2^{l\frac{1-s}s} h(2^l)\ln \oz^{-1}}\Big\}\Big)\notag\\
&=: I_{1,\vz}+ \sum_{l=2}^{\lceil \log_2 d
\rceil-1}I_{l,\vz}.\label{3.8}
\end{align}

 By \eqref{3.0} we have
\begin{equation}\label{3.9}\frac{I_{1,\vz}}{(\ln\vz^{-1})^s+d^t}\le \frac {4C\ln
 \big(\frac{2\ln
\vz^{-2}}{a_1\ln\oz^{-1}}\big)}{(\ln\vz^{-1})^s+d^t}\longrightarrow
0\end{equation} as $\vz^{-1}+d\to\infty$.

We set $$m_{l,\vz}=\frac{\ln \vz^{-2}}{2^{l\frac{1-s}s} h(2^l)\ln
\oz^{-1}}.$$

It is easy to see that the sequence
\begin{equation*}\{d_{l,\vz}\}\equiv \{\frac{m_{l,\vz}}{2^l}\}=\{\frac{\ln \vz^{-2}}{2^{l/s}
h(2^l)\ln\oz^{-1}}\}\end{equation*} satisfies
\begin{equation}\label{3.10}2^{-(1+1/s)}\le \frac{d_{l+1,\vz}}{d_{l,\vz}}= \frac{h(2^l)}{2^{1/s}h(2^{l+1})}
\le 2^{-1/s}<1,\end{equation}
$$\lim_{l\to\infty}d_{l,\vz}=
\lim_{l\to\infty}\frac{m_{l,\vz}}{2^l}=0, \ \ {\rm and} \ \
m_{2,\vz}\ge 4\  {\rm for\ sufficiently \ large}\ \vz^{-1}.$$ Then
there exists an $l_0\ge 2$ such that
\begin{equation}\label{3.11}d_{l_0,\vz}=\frac{m_{l_0,\vz}}{2^{l_0}}\ge 1\ \ \ {\rm and}\ \ \
d_{l_0+1,\vz}=\frac{m_{l_0+1,\vz}}{2^{l_0+1}}<1.\end{equation} It
follows that
\begin{equation}\label{3.12} d_{l,\vz}\le 2^{(1+1/s)(l_0+1-l)}d_{l_0+1,\vz}\le
2^{(1+1/s)(l_0+1-l)}\ \ {\rm for}\ \ l\le l_0,\end{equation}
\begin{equation}\label{3.13}\frac 1{d_{l,\vz}}\le 2^{(1+1/s)(l-l_0)} \frac1 {d_{l_0,\vz}}\le 2^{(1+1/s)(l-l_0)}\ \ {\rm for}\ \ l>
l_0,
\end{equation}
and $$1\le (d_{l_0,\vz})^s=\frac{(\ln \vz^{-2})^s}{2^{l_0}
(h(2^{l_0}))^s(\ln\oz^{-1})^s}\le 2^{1+s}.$$It follows that
\begin{equation}\label{3.14}2^{l_0}\le \frac{(2\ln \vz^{-1})^s}{
(h(2^{l_0}))^s(\ln\oz^{-1})^s},\end{equation}and $h(2^{l_0})$
tends to $\infty$ as $\vz^{-1}\to \infty$.

We note that  $I_{l,\vz}=0$ if $m_{l,\vz}< 1$.
  By \eqref{3.0} we have
\begin{equation}\label{3.15}I_{l,\vz}\le C\Bigg\{\begin{array}{ll}m_{l,\vz}\ln \big(\frac{2}{d_{l,\vz}}\big), & d_{l,\vz}\le 1,\\
2^l\ln \big(2d_{l,\vz}\big), &
d_{l,\vz}\ge1.\end{array}\end{equation} Hence, by \eqref{3.15},
\eqref{3.12}, \eqref{3.13}, and \eqref{3.11} we have
\begin{align*} \sum_{l=2}^{\lceil \log_2 d \rceil-1}I_{l,\vz}&\le
\sum_{l=2}^{l_0}C 2^l\ln (2d_{l,\vz})+\sum_{l=l_0+1}^\infty
Cm_{l,\vz}\ln(\frac2{d_{l,\vz}})\\ &\le C\sum_{l=2}^{l_0}2^l
[(1+1/s)(l_0+1-l)+1]\ln 2 \\ +C\sum_{l=l_0+1}^\infty &\frac{\ln
\vz^{-2}}{2^{l\frac{1-s}s} h(2^{l_0})\ln \oz^{-1}}
[(1+1/s)(l-l_0)+1]\ln2\\&\le C_12^{l_0}+C_1 \frac{\ln
\vz^{-2}}{2^{l_0\frac{1-s}s} h(2^{l_0})\ln \oz^{-1}} \le C_2
2^{l_0}.
\end{align*}Hence, by \eqref{3.14} we have
$$ \frac{\sum\limits_{l=2}^{\lceil \log_2 d
\rceil-1}I_{l,\vz}}{(\ln\vz^{-1})^s+d^t}\le \frac{ C_3
2^{l_0}}{(\ln\vz^{-1})^s}\le \frac{C_32^s}{
(h(2^{l_0}))^s(\ln\oz^{-1})^s}\to 0 $$as $\vz^{-1}\to \infty$.
This, combining with \eqref{3.8} and \eqref{3.9} means that
EC-$(s,t)$-WT with $t\le1$ holds for APP if \eqref{1.8} holds.
Theorem 1.1  is proved. $\hfill\Box$

\newpage

\noindent{\it \textbf{Proof of Theorem \ref{thm2}}.}

\vskip 2mm

According to \cite[Theorem 3.1]{LXD}, we know that we have the
same results in the worst and average case settings under ABS
concerning EC-$(s,t)$-WT for $0<s\le 1$ and $t>0$.

\vskip 2mm

(i)  It follows that EC-$(s,t)$-WT always holds for $0<s\le 1$ and
$t>1$ for ABS. This yields that EC-$(s,t)$-WT holds  for $s> 1$
and $t>1$ for ABS, and by \eqref{2.2} also for NOR. Hence (i)
holds.

\vskip 2mm

(ii) If EC-$(s,1)$-WT with $s\ge 1$ holds for ABS or NOR, then
$(s,1)$-WT with $s\ge 1$ holds also for ABS or NOR. It follows
from \cite[Theorem 5.1]{LX3}  that $\lim\limits_{j\to\infty}
a_j=\infty.$

On the other hand, if $\lim\limits_{j\to\infty}a_j=\infty$, then
EC-WT holds for ABS or NOR and hence, EC-$(s,1)$-WT with $s\ge 1$
also holds for ABS or NOR. This completes the proof of (ii).

\vskip 2mm

(iii)
 EC-$(1,t)$-WT with $t<1$  holds for ABS iff \eqref{1.10} holds.
 (iii) is proved.

\vskip 2mm

(iv)   If  \eqref{1.11} holds, then
 EC-$(s,t)$-WT with  $s<1, t\le1$ holds for ABS, and
 also for NOR by \eqref{2.2}.

On the other hand, assume that  EC-$(s,t)$-WT with $s<1, t\le1$
holds for ABS or NOR.
 By \eqref{2.2}, we know that  EC-$(s,t)$-WT with
$s<1, t=1$ holds also for  NOR.

 Also by \eqref{2.2}, we have
\begin{equation}\label{3.16}\frac{\ln  n^{\rm avg, ABS}(\va
,d)}{(\ln\vz^{-1})^s+d}=\frac{[\ln (e^{\rm
avg}(0,d)\vz^{-1})]^s+d}{(\ln\vz^{-1})^s+d} \cdot \frac{\ln n^{\rm
avg, NOR}((e^{\rm avg}(0,d))^{-1}\vz ,d)}{[\ln (e^{\rm
avg}(0,d)\vz^{-1})]^s+d}.\end{equation} By \eqref{2.4} we have
$e^{\rm avg}(0,d)\vz^{-1}+d\to \infty$ iff $\vz^{-1}+d\to\infty$,
and
\begin{equation}\label{3.17}\frac{[\ln (e^{\rm
avg}(0,d)\vz^{-1})]^s+d}{(\ln\vz^{-1})^s+d}\le
\frac{2^s[\ln(e(0,d))]^s}{d}+\frac{2^s(\ln
\vz^{-1})^s+d}{(\ln\vz^{-1})^s+d} \le
M_1^s\oz^{sa_1}+2^s.\end{equation} Since EC-$(s,t)$-WT with $s<1,
t=1$ holds  for  NOR, we get
$$\lim_{\vz^{-1}+d\to\infty}\frac{\ln n^{\rm
avg, NOR}((e^{\rm avg}(0,d))^{-1}\vz ,d)}{[\ln (e^{\rm
avg}(0,d)\vz^{-1})]^s+d}=0,$$which combining with \eqref{3.16} and
\eqref{3.17}, yields that
$$\lim_{\vz^{-1}+d\to\infty}\frac{\ln  n^{\rm avg, ABS}(\va
,d)}{(\ln\vz^{-1})^s+d}=0.$$ It follows that EC-$(s,t)$-WT with
$s<1, t=1$ holds  for ABS. Hence \eqref{1.11} holds. (iv) is
proved.

\vskip 2mm

(v) If EC-$(s,t)$-WT with $s> 1$ and $t<1$ holds, then $(s,t)$-WT
with $s> 1$ and $t<1$ holds. It follows from \cite[Theorem
4.7]{CW}, we have \eqref{1.12}.

On the other hand, suppose taht \eqref{1.12} holds.  We want to
show that $(s,t)$-WT with $s> 1$ and $t<1$ holds under ABS or NOR.
By \eqref{2.2} it suffices to prove that for $s> 1$ and $t<1$,
\begin{equation}\label{3.18}\lim_{\vz^{-1}+d\to\infty}\frac{\ln  n^{\rm avg, ABS}(\va
,d)}{(\ln\vz^{-1})^s+d^t}=0.\end{equation}

  It follows from \cite[Eqution (3.12)]{CWZ} that for any $s_d\in
(0,1/2]$
\begin{equation*}n^{avg,\rm ABS}(\va,d)\le\va^{\frac{-2(1-s_d)}{s_d}}\Big(\sum_{k=1}^\infty
\lz_{d,k}^{1-s_d}\Big)^{\frac{1}{s_d}},\end{equation*}where
\begin{equation}\label{3.19} u_d:=\max(\oz^{a_d}, \frac{1}{2d}), \qquad
\text{and}\qquad
s_d:=\frac{1}{2}\big(\ln^+\frac{1}{u_d}\big)^{-1}, \qquad d\in
\Bbb N.
\end{equation}
Furthermore, if \eqref{1.12} holds, then it follows from
\cite[Equations (3.13) and (3.14)]{CWZ} that
\begin{equation}\label{3.20} \ln n^{\rm avg, ABS}(\va,d)\le \frac{2}{s_d}\ln \va^{-1}+\frac {e^{1/2}M_{1/2}}{s_d}
\sum_{k=1}^du_k,\end{equation} and
\begin{equation*}\lim\limits_{d\to\infty}\frac1{d^ts_d}\,{\sum_{k=1}^du_k}=0,\end{equation*}
which means that
$$\lim_{\vz^{-1}+d\to\infty}\frac{e^{1/2}M_{1/2}
\sum_{k=1}^du_k}{s_d((\ln\vz^{-1})^s+d^t)}\le
e^{1/2}M_{1/2}\lim_{d\to\infty}\frac{ \sum_{k=1}^du_k}{s_dd^t}
=0.$$
 In order to prove \eqref{3.18}, by \eqref{3.20} it suffices to prove
 that for $s>1$,
\begin{equation}\label{3.21} \lim_{\vz^{-1}+d\to\infty}\frac{2\ln
\vz^{-1}}{s_d((\ln\vz^{-1})^s+d^t)}=0.\end{equation} By
\eqref{3.19} we have
\begin{equation}\label{3.22}\frac1{s_d}=2\ln^+ \frac1{u_d}\le
2\ln^+(2d).\end{equation}
 For $s>1$, by the
Young inequality $ab\le \frac{a^p}p+\frac{b^{p'}}{p'},\ a,b\ge0, \
1/p+1/p'=1$ with $p=\frac{1+s}2,\ p'=\frac{s+1}{s-1}$ we have
$$\lim_{\vz^{-1}+d\to\infty}\frac
{\ln^+(2d)\ln(\va^{-1})}{(1+\ln\vz^{-1})^s+d^t}=
\lim_{\vz^{-1}+d\to\infty}\frac
{\frac{(\ln\va^{-1})^{\frac{s+1}2}}p+ \frac
{(\ln^+(2d))^{p'}}{p'}} {(\ln\vz^{-1})^s+d^t}=0,$$ which combining
\eqref{3.22}, gives \eqref{3.21}. This finishes the proof of (v).

\vskip 2mm

The proof of  Theorem \ref{thm2} is completed. $\hfill\Box$


\begin{thebibliography}{99}

\bibitem{CW}J. Chen, H. Wang, Average Case tractability of  multivariate  approximation with
Gaussian  kernels, http://arxiv.org/abs/1802.01302.

\bibitem{CWZ}J. Chen, H. Wang, J. Zhang, Average Case $(s, t)$-weak tractability of non-homogeneous tensor product
problems, to appear in J. Complexity.



\bibitem{DKPW}J. Dick, P. Kritzer, F. Pillichshammer, H. Wo\'zniakowski,
Approximation of analytic functions in Korobov spaces, J.
Complexity 30 (2014) 2-28.

\bibitem{DLPW} J. Dick, G. Larcher, F. Pillichshammer, H. Wo\'zniakowski,
 Exponential convergence and tractability of multivariate integration for Korobov spaces, Math. Comp. 80 (2011) 905-930.
\bibitem{ET}D. E. Edmunds,  H. Triebel, Function Spaces, Entropy Numbers,
Differential Operators, Cambridge Tracts in Math. 120, Cambridge
University Press, Cambridge, UK, 2008.


\bibitem{IKPW}C. Irrgeher, P. Kritzer, F. Pillichshammer, H. Wo\'zniakowski,
 Tractability of multivariate approximation defined over Hilbert spaces with exponential weights, J. Approx. Theory 207 (2016) 301-338.
 \bibitem{KPW0} P. Kritzer, F. Pillichshammer, H. Wo\'zniakowski, Multivariate integration of infinitely many times differentiable functions
in weighted Korobov spaces, Math. Comp. 83 (2014) 1189-1206.
 \bibitem{KPW}P. Kritzer, F. Pillichshammer, H. Wo\'zniakowski, Tractability of multivariate analytic problems, in: P. Kritzer,
H. Niederreiter, F. Pillichshammer, A. Winterhof (Eds.), Uniform
Distribution and Quasi-Monte Carlo Methods. Discrepancy,
Integration and Applications, De Gruyter, Berlin, 2014, pp.
147-170.
\bibitem{K} T. K\"uhn, A lower estimate for entropy numbers, J. Approx. Theory, 110 (2001)
120-124.
\bibitem{KMU} T. K\"uhn, S. Mayer,  T. Ullrich, Counting via entropy: new preasymptotics
for the approximation numbers of Sobolev embeddings, SIAM J.
Numer. Anal. 54 (6) (2016) 3625-3647.
\bibitem{LX1} Y. Liu, G. Xu,  A note on tractability of multivariate analytic
problems, J. Comlexity, 34 (2016) 42-49.
\bibitem{LX2} Y. Liu, G. Xu, Average case tractability of a multivariate approximation problem, J. Comlexity, 43 (2017) 76-102.
\bibitem{LX3}  Y. Liu, G. Xu, $(s,t)$-weak tractability of multivariate linear problems in the average case setting, preprint.
\bibitem{LXD}  Y. Liu, G. Xu, Y. Dong, EC-$(s,t)$-weak tractability of multivariate linear problems in the average case setting, preprint.
\bibitem{LGM} G.G.Lorentz, M.V.Golitschek, Y.Makovoz, Constructive
Approximation, Advanced Problems, Springer-Verlag, New York, 1996.
\bibitem{NW1} E. Novak, H. Wo\'zniakowski, Tractablity  of Multivariate Problems, Volume I: Linear Information, EMS, Z\"urich, 2008.
\bibitem{NW2} E. Novak, H. Wo\'zniakowski, Tractablity  of Multivariate Problems, Volume II: Standard Information for Functionals, EMS, Z\"urich, 2010.
\bibitem{NW3} E. Novak, H. Wo\'zniakowski, Tractablity  of Multivariate Problems, Volume III: Standard Information for Operators, EMS, Z\"urich, 2012.


\bibitem{PP} A. Papageorgiou, I. Petras, A new criterion for tractability of multivariate problems, J. Complexity 30 (2014) 604-619.

\bibitem{S} C. Sch\"utt, Entropy numbers of diagonal operators between
symmetric Banach spaces, J. Approx. Theory, 40 (1984) 121-128.
\bibitem{SW} I. H. Sloan, H. Wo\'zniakowski, Multivariate approximation for analytic functions with Gaussian kernels, J. Complexity 45
(2018) 1-21.
\bibitem{X2}G. Xu, Exponential convergence-tractability of general linear problems in the average case setting, J. Complexity 31 (2015)
617-636.

\end{thebibliography}
\end{document}